\documentclass[12pt]{amsart}
\usepackage{amscd,amssymb,amsmath,latexsym,graphicx,amsxtra}
\usepackage[all]{xy}
\pagestyle{headings}
\pagestyle{plain}
\textwidth = 6.5 in \textheight = 8.5 in \oddsidemargin = 0.0 in
\evensidemargin = 0.0 in \topmargin = 0.0 in \headheight = 0.0 in
\headsep = 0.0 in
\parskip = 0.1in
\parindent = 0.0in

\newtheorem{theorem}{Theorem}[section]

\def\Z{{\mathbb Z}}
\theoremstyle{definition}

\newtheorem{example}[theorem]{Example}
\newtheorem{proposition}[theorem]{Proposition}
\newtheorem{corollary}[theorem]{Corollary}
\newtheorem{remark}[theorem]{Remark}
\numberwithin{equation}{section}
\linespread{1.3}

\newcommand{\pN}{N\times N,N\times N \setminus \Delta(N)}
\begin{document}
\title{Obstruction theory for coincidences of multiple maps}
\author{Tha\'is Monis}
\address{Dept. de Matem\'atica - UNESP Rio Claro, Av. 24A, 1515 - Bela Vista, Rio Claro-SP, Brasil}
\email{tfmonis@rc.unesp.br}
\author{Peter Wong}
\address{Department of Mathematics, Bates College, Lewiston, ME 04240, U.S.A.}
\email{pwong@bates.edu}
\thanks{This work was initiated during the second author's visit to the Department of Mathematics at UNESP-Rio Claro, May 18 - June 17, 2014 and completed during his visit to the department Feb. 21 - 26, 2016. The second author would like to thank the Mathematics Department for the invitation and financial support. The first author was supported by FAPESP of Brazil Grant number 2014/17609-0.}

\begin{abstract} Let $f_1,..., f_k:X\to N$ be maps from a complex $X$ to a compact manifold $N$, $k\ge 2$. In previous works \cite{BLM,MS}, a Lefschetz type theorem was established so that the non-vanishing of a Lefschetz type coincidence class $L(f_1,...,f_k)$ implies the existence of a coincidence $x\in X$ such that $f_1(x)=...=f_k(x)$. In this paper, we investigate the converse of the Lefschetz coincidence theorem for multiple maps. In particular, we study the obstruction to deforming the maps $f_1,...,f_k$ to be coincidence free. We construct an example of two maps $f_1,f_2:M\to T$ from a sympletic $4$-manifold $M$ to the $2$-torus $T$ such that $f_1$ and $f_2$ cannot be homotopic to coincidence free maps but for {\it any} $f:M\to T$, the maps $f_1,f_2,f$ are deformable to be coincidence free.
\end{abstract}
\date {\today}
\keywords{Obstruction theory, Lefschetz coincidence theory, local coefficients}
\subjclass[2010]{Primary: 55M20; secondary: 55S35}
\maketitle

\section{Introduction}
The celebrated Lefschetz coincidence theorem states that for any two maps $f,g:M\to N$ between closed connected oriented triangulated $n$-manifolds, if the Lefschetz coincidence number (trace) $L(f,g)$ is non-zero then the coincidence set $C(f,g)=\{x\in M \mid f(x)=g(x)\}$ must be non-empty. However, the converse does not hold in general. In this direction, E. Fadell showed \cite{fadell} that if $N$ is simply-connected then the vanishing of $L(f,g)$ is sufficient to deform the maps $f\sim f', g\sim g'$ so that $C(f',g')=\emptyset$. For non-simply connected $N$, the vanishing of the Nielsen number $N(f,g)$ often provides the converse. Following \cite{fadell} and \cite{fadell-husseini}, it was shown in \cite{GJW} that the (primary) obstruction $o_n(f,g)$ ($n\ge 3$) to deforming $f$ and $g$ to be coincidence free is Poincar\'e dual to the twisted Thom class of the normal bundle of the diagonal $\Delta(N)$ in $N\times N$ with appropriate local coefficients.

Suppose $M$ is a compact topological space, $N$ is a closed connected oriented manifold, and $f_1,...,f_k: M\to N$ are maps. A Lefschetz type coincidence class $L(f_1,...,f_k)$ was defined in \cite{BLM} and it was shown that $L(f_1,...,f_k)\ne 0$ implies $C(f_1,...,f_k):=\{x\in M \mid f_1(x)=...=f_k(x)\}$ is non-empty. This Lefschetz type result has been extended to non-orientable $N$ in \cite{MS}.

The purpose of this paper is to examine the converse of this Lefschetz type theorem, that is, the problem when
$$
L(f_1,...,f_k)=0 \Rightarrow  f_1\sim f_1',...,f_k\sim f_k' \text{~such that~} C(f_1',...,f_k')=\emptyset.
$$ 
The approach here is via obstruction theory, following \cite{fadell, fadell-husseini, dobrenko,GJW}. We examine the primary obstruction to deforming $f_1,...,f_k$ to be coincidence free on the $(k-1)n$-th skeleton where $n=\dim N$. We prove an analogous converse of the Lefschetz type coincidence theorem when $N$ is simply-connected and $\dim M=(k-1)n$. We give further examples of Jiang-type spaces $N$ for which the converse theorem holds. This paper is organized as follows. In section 2, we generalize the Lefschetz coincidence classes defined in \cite{BLM} and in \cite{MS} to homomorphisms $\Lambda (f_1,...,f_k;R_{N^k})$ with arbitrary local coefficients. This homomorphism is similar to a certain homomorphism $\phi$ of \cite{daci-peter}, relating to the preimage of a map. In section 3, we study the primary obstruction to deforming $f_1,...,f_k$ to be coincidence free using the appropriate  local coefficient system $\pi_j^*(F)$. The calculation of the local system $\pi_j^*(F)$ is carried out in section 4. We then compute the obstruction to deformation in section 5 and prove the converse of the Lefschetz coincidence theorem for multiple maps in section 6. We also compare our result to similar Nielsen type results of P. Staecker \cite{St}. In section 7, we illustrate our results with examples. In particular, we construct two maps $f_1,f_2: M\to N$, $\dim M>\dim N$ such that $f_1,f_2$ are not deformable to be coincidence free but for {\it any} $f:M\to N$, $f_1,f_2,f$ are homotopic to be coincidence free (see Examples \ref{2-sphere} and \ref{torus}).

\section{Lefschetz coincidence homomorphism with local coefficients}

In this section, we review the Lefschetz coincidence classes introduced in \cite{BLM} and in \cite{MS} and compare them with a more general homomorphism studied in \cite{daci-peter}.

Let $X$ be a compact topological space and $N$ be a connected closed $n$-manifold. For now, let us assume $N$ is oriented. Suppose $f_1,...,f_k:X\to N$ are maps, $\mu \in H^n(\pN)$ be the Thom class of the normal bundle of the diagonal $\Delta(N)$ in $N\times N$. Then, in \cite{BLM}, the authors defined
\begin{equation}\label{BLM-L}
L_1(f_1,...,f_k)=[(j\times ...\times j)\circ (h_1,...,h_{k-1})]^*(\mu \times ...\times \mu)
\end{equation}
where $h_i:X\to N\times N$ is given by $h_i(x)=(f_i(x),f_{i+1}(x))$ and $j:N\times N \hookrightarrow (\pN)$ is the inclusion. In \cite{MS}, a similar Lefschetz type coincidence class was defined as follows. Let $\Delta_k(N)=\{(x,...,x)\in N^k \mid x\in N\}$ be the diagonal of $N$ in $N^k$ and $\mu_k$ be the Thom class of the normal bundle of $\Delta_k(N)$ in $N^k$ in $H^{(k-1)n}(N^k,N^k\setminus \Delta_k(N); R\times \Gamma^*_N \times ...\times \Gamma^*_N)$. Here $R$ is a principal ideal domain and $\Gamma^*_N={\rm Hom}(\Gamma_N,R)$ for the $R$-orientation system $\Gamma_N$ on $N$. Define $\tilde h_i:X\to N$ by $\tilde h_i(x)=(f_1(x),f_{i+1}(x))$. Then 
\begin{equation}\label{MS-L}
\begin{aligned}
L_2(f_1,...,f_k)&=[i \circ (f_1,...,f_{k})]^*(\mu_k) \\
                       &= [(j\times ...\times j)\circ (\tilde h_1,...,\tilde h_{k-1})]^*(\mu \times ...\times \mu)
\end{aligned}
\end{equation}
where $i:N^k \to (N^k, N^k\setminus \Delta_k(N))$ is the inclusion. Note that 
$$(j\times ...\times j)\circ (h_1,...,h_{k-1})=\sigma \circ i \circ (f_1,...,f_k)$$
where $\sigma (x_1,...,x_k)=((x_1,x_2), (x_2,x_3),...,(x_{k-1},x_k))$. The induced homomorphism $\sigma^*:\otimes_{k-1} [H^n(N^2,N^2\setminus \Delta(N))] \to H^{(k-1)n}(N^k,N^k\setminus \Delta_k(N)$ is an isomorphism when $N$ is orientable. In this case, $\sigma^*(\mu \times ...\times \mu)=\mu_k$ and hence $L_1(f_1,...,f_k)=L_2(f_1,...,f_k)$.

A close inspection indicates that $L_2(f_1,...,f_k)$ is in fact a special case of the coincidence index homomorphism studied in \cite{daci-peter} which we recall as follows. Let $X,Y$ be closed manifolds of dimension $a$ and $b$, respectively. Suppose $B\subset Y$ is a closed submanifold of dimension $\ell$. Let $F:X\to Y$ be transverse to $B$ so that $C=F^{-1}(B)$ is a closed submanifold of dimension $a-b+\ell$. Suppose $R_Y$ is a local system on $Y$ with typical group $R$, $R_X=F^*(R_Y), R'_X=R_X \times \Gamma_X$ and $R'_C=i_1^*(R'_X)$ where $i_1:C=F^{-1}(B) \hookrightarrow X$. For any $0\le j$, define $\phi(j)$ to be the composite homomorphism given by
$$
H^j(Y,Y\setminus B;R_Y) \stackrel{F^*}{\longrightarrow} H^j(X,X\setminus C;R_X) \stackrel{A^{-1}}{\longrightarrow} H_{a+b-j}(C;R'_C)
$$
where $A$ represents the Alexander Duality isomorphism (see e.g. \cite{Spanier}). It was shown in \cite[Theorem 2.4]{daci-peter} that for any $r\in H^j(Y,Y\setminus B;R_Y)$, 
$${i_1}_*\phi(j)(r)=F^*(j^*_2(r))\cap [z_X]$$
where $j_2:Y\hookrightarrow (Y,Y\setminus B)$ and $[z_X]$ is the (twisted) fundamental class of $X$ with coefficients in $R_X$. It follows from sections 4.3 and 4.5 of \cite{daci-peter} that $\phi((k-1)n)\ne 0 \Rightarrow C\ne \emptyset$. Now let $a=(k-1)n, b=kn, Y=N^k, B=\Delta_k(N), F=(f_1,...,f_k), C=C(f_1,...,f_k)$ and $j=(k-1)n$. If $R_Y=R\times \Gamma^*_N \times ...\times \Gamma^*_N$ then it is easy to see that
\begin{equation}\label{L2-phi}
L_2(f_1,...,f_k)=I^* \circ A\circ \phi((k-1)n)(\mu_k)
\end{equation}
where $I:X \hookrightarrow (X,X\setminus C)$ is the inclusion. Now $L_2(f_1,...,f_k)\ne 0$ implies that $\phi((k-1)n)\ne 0$. Thus, the Lefschetz type coincidence theorems of \cite{MS} and of \cite{BLM} follow immediately.

Next, we define a Lefschetz coincidence homomorphism with any arbitrary local coefficient system $R_{N^k}$.

Let $X$ be a compact topological space and $N$ a closed manifold of dimension $n$. Suppose $f_1,...,f_k:X\to N$ are maps. Let $F=(f_1,...,f_k):(X,X\setminus C(f_1,...,f_k))\to (N^k,N^k\setminus \Delta_k(N))$, $I:X\hookrightarrow (X,X-C(f_1,...,f_k))$. Define the {\it Lefschetz coincidence homomorphism of $f_1,...,f_k$ with local coefficients $R_{N^k}$} to be

\begin{equation}\label{Lef-R_N}
\begin{aligned}
\Lambda (f_1,...,f_k;R_{N^k}):=I^*\circ F^*: H^{(k-1)n}(N^k,N^k\setminus \Delta_k(N); R_{N^k})\to H^{(k-1)n}(X; F^*(R_{N^k})).
\end{aligned}
\end{equation}

Thus, we have
\begin{equation}\label{Lef-R_N2}
\Lambda (f_1,...,f_k;R\times \Gamma^*_N\times ...\times \Gamma^*_N)(\mu_k):=i^*\circ (f_1,...,f_k)^*(\mu_k)=L_2(f_1,...,f_k)
\end{equation}
where $i: N^k \hookrightarrow (N^k,N^k\setminus \Delta_k(N))$. Now, the following result, which generalizes the main theorems of \cite{BLM,MS}, is immediate.

\begin{theorem}\label{Lef_thm}
Let $X$ be a compact topological space, $N$ a closed manifold of dimension $n$ and $k\ge 3$. For any maps $f_1,...,f_k:X\to N$ and local coefficient $R_{N^k}$, if $\Lambda (f_1,...,f_k; R_{N^k})\ne 0$ then $C(f_1,...,f_k)\ne \emptyset$.
\end{theorem}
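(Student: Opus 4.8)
The plan is to prove the contrapositive: assuming $C(f_1,\dots,f_k)=\emptyset$, I will show that $\Lambda(f_1,\dots,f_k;R_{N^k})$ is the zero homomorphism, which is exactly the assertion that $\Lambda\ne 0$ forces $C(f_1,\dots,f_k)\ne\emptyset$.

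First I would check that the two maps whose composite defines $\Lambda$ in \eqref{Lef-R_N} are legitimately defined on the pair level. The map $F=(f_1,\dots,f_k)$ sends $X\setminus C(f_1,\dots,f_k)$ into $N^k\setminus\Delta_k(N)$: if $x\notin C(f_1,\dots,f_k)$ then $f_1(x),\dots,f_k(x)$ are not all equal, so $F(x)\notin\Delta_k(N)$. Hence $F$ is a map of pairs $(X,X\setminus C(f_1,\dots,f_k))\to(N^k,N^k\setminus\Delta_k(N))$ and $F^*$ is defined; likewise $I^*$ is induced by the inclusion of pairs $(X,\emptyset)\hookrightarrow(X,X\setminus C(f_1,\dots,f_k))$, so the composite $I^*\circ F^*$ lands in $H^{(k-1)n}(X;F^*(R_{N^k}))$ as stated.

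Now I would invoke the hypothesis $C(f_1,\dots,f_k)=\emptyset$, which gives $X\setminus C(f_1,\dots,f_k)=X$. Thus the intermediate term in the composite defining $\Lambda$ is $H^{(k-1)n}(X,X;F^*(R_{N^k}))$, and the long exact cohomology sequence of the pair $(X,X)$ with the local coefficient system $F^*(R_{N^k})$ shows this group is zero in every degree. Since $\Lambda(f_1,\dots,f_k;R_{N^k})=I^*\circ F^*$ factors through $H^{(k-1)n}(X,X;F^*(R_{N^k}))=0$, it is the zero homomorphism. Taking the contrapositive completes the proof, and specializing to $R_{N^k}=R\times\Ga^*_N\times\cdots\times\Ga^*_N$ and evaluating on $\mu_k$ recovers, via \eqref{Lef-R_N2}, the Lefschetz type theorems of \cite{BLM,MS}.

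I do not expect any real obstacle here: the argument is the exact analogue of the classical Lefschetz coincidence argument and of the reasoning in \cite{BLM,MS,daci-peter}, and the only point needing (routine) attention is the verification that $F$ restricts correctly to the complements so that $\Lambda$ is defined at the level of pairs.
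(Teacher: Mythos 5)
Your proof is correct and takes essentially the same approach as the paper: the paper declares the theorem ``immediate'' precisely because $\Lambda$ is defined as a composite that factors through $H^{(k-1)n}(X,X\setminus C(f_1,\dots,f_k);F^*(R_{N^k}))$, which vanishes when $C(f_1,\dots,f_k)=\emptyset$ since it is then $H^{(k-1)n}(X,X;\cdot)=0$. (Invoking the long exact sequence of the pair $(X,X)$ is a slight detour --- the relative cochain complex of $(X,X)$ is already zero --- but this is only cosmetic.)
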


In section 5, we will generalize the Lefschetz coincidence classes of \cite{BLM,MS} to $\Lambda (f_1,...,f_k; R_{N^k})(\tilde \mu_k)$ for an approriate local coefficient and cohomology class $\tilde \mu_k$ which will be called the twisted Thom class of $N^k$. 

\section{Obstructions and local coefficients}

In \cite{fadell}, E. Fadell showed that the classical Lefschetz coincidence class $L(f,g)$ coincides with the obstruction to deforming $f$ and $g$ to be coincidence free provided the target manifold is simply-connected. For non-simply connected manifolds, the obstruction class \cite{fadell-husseini} necessarily involves certain local coefficient system. We now recall, following the treatment in \cite{fadell-husseini} or in \cite{dobrenko}, the local coefficients employed in the obstruction to deformation.

Let $f:X\to Y$ be a map from a finite connected complex $X$ to a closed connected manifold $Y$ and $B\subset Y$ be a closed submanifold. First, we replace the inclusion $i: Y\setminus B \hookrightarrow Y$ by a fiber map $p:E\to Y$ where $E=\{(x,\omega)\mid x\in Y\setminus B, \omega \in Y^{[0,1]}, \omega (0)=x\}$ and $p(x,\omega)=\omega(1)$. Let the typical fiber be $F=p^{-1}(x_0)=\{(x,\omega)\mid \omega(1)=x_0\}$ for some $x_0\in Y\setminus B$. It follows that $\pi_{j+1}(F;\overline{x_0})\cong \pi_j(Y,Y\setminus B;x_0)$. 

It is straightforward to verify that $f$ is deformable into $Y\setminus B$, i.e., $f\sim f'$ so that $f'(X)\subseteq Y\setminus B$, if and only if $f$ can be lifted to a map $\tilde f:X\to E$ such that $p\circ \tilde f=f$. Furthermore, $f$ is deformable into $Y\setminus B$ if and only if the pullback fibration $q: f^*(p)\to X$ induced by $f$ has a (global) section. 

If $(Y,Y\setminus B)$ is $(m-1)$ connected. It follows from the classical obstruction theory for lifting (see e.g. \cite{whitehead}) that the primary obstruction to finding a section to $q$ is given by a class $o_m(f)\in H^m(X;\pi^*_{m-1}(F))$ where $\pi^*_{m-1}(F)$ is the local system on $X$ induced by $f:X\to Y$ from the local system $\pi_{m-1}(F)$ on $Y$. This class is the obstruction to deforming $f$ to $f'$ with $f'(X^{(m)})\subset Y\setminus B$ where $X^{(m)}$ is the $m$-skeleton of $X$.

Now let $X=M, Y=N^k, B=\Delta_k(N)$ and $f=(f_1,...,f_k)$  where $f_1,..., f_k:M \to N$ are maps from a finite connected CW-complex $M$ to a closed connected manifold $N$ of dimension $n$, where $k\ge 2$. Next, we will study the obstruction to deforming the maps $f_1,...,f_k$ to be coincidence free but first we must calculate the local coefficient system $\pi^*_{m-1}(F)$.

\
In this setting, we set $$E=E(N^k, N^k \setminus \Delta_k(N)) = \{ (y, \omega) \ | \  y \in N^k \setminus \Delta_k(N) \ \text{and} \  \omega: I \to N^k \ \text{is a path with} \ \omega(0)=y \}.$$
Let $p: E \to N^k$ be the fiber map given by $p(y, \omega)=\omega(1)$. If we take a point $y_0=(y_0^1, \ldots , y_0^k) \in N^k$ the fiber over $y_0$, $F=p^{-1}(y_0)$, is given by $$F=\{(\omega(0), \omega) \  |   \  \omega(1)=y_0 \}.$$ In words, the fiber is the space of paths in $N^k$ which begins in $N^k \setminus \Delta_k(N) $ and end at $y_0$.  Let $y_0 \in N^k \setminus \Delta_k(N)$ and denote by $\overline{y_0}$ the constant path $\overline{y_0}(t)=y_0$ for every $t$. There is an isomorphism $$ \pi_j(F; \overline{y_0}) \simeq \pi_{j+1}(N^k, N^k \setminus \Delta_k(N); y_0).$$ The above identification can be seen as the following:  the element $[\alpha] \in \pi_j(F; \overline{y_0})$, where $\alpha: (S_j, e) \to (F, \overline{y_0})$, is mapped to $[\beta] \in  \pi_{j+1}(N^k, N^k \setminus \Delta_k(N); y_0) $, where $\beta: (D^{j+1}, S^j, e) \to (N^k, N^k \setminus \Delta_k(N), y_0)$ is given by $$\beta(e+t(v-e))=\alpha(z)(1-t),$$ where $e=(1,0,\ldots , 0)$ and $v\in S^j$.

Now, since $(N^k, N^k \setminus \Delta_k(N))$ is $(n(k-1) -1)$-connected,  $F$ is $(n(k-1) -2)$-connected. Therefore, if $3 \le n(k-1)$ then $\pi_{(n(k-1) -1)}(F) $ forms a local system on $N^{k}$. Our next step is to describe such local system $\pi_{(n(k-1) -1)}(F) $ on $N^{k}$.

\section{The local system $\pi_{(n(k-1) -1)}(F)$}

As we saw in the last section, if we choose a base point $y_0=(y_0^1, \ldots , y_0^k) \in N^k \setminus \Delta_k(N)$, we have a natural identification $$\psi_{y_0}: \pi_{n(k-1)}(N^k, N^k \setminus \Delta_k(N); y_0)  \simeq  \pi_{n(k-1)-1}(F; \overline{y_0}),$$ where $F=p^{-1}(y_0)$ and $ \overline{y_0}$ is the constant path at $y_0$. In fact, $\psi$ is an isomorphism of local systems on $N^k\setminus \Delta_k(N)$. We now determine the action of $$\pi_1(N^k \setminus \Delta_k(N); y_0)=\pi_1(N; y_0^1) \times \cdots \times \pi_1(N; y_0^k) = \underbrace{ \pi \times \cdots \times \pi}_{k-times}$$ on $$ \pi_{n(k-1)} (N^k, N^k \setminus \Delta_k(N); y_0).$$

Let $\eta : \tilde{N} \to N$ be the universal cover of $N$. We assume that $y_0=(y_0^1, \ldots , y_0^k)$ lies inside a small tubular neighborhood of $\Delta_k(N)$ and we consider the diagram 

\begin{table}[h]
\centering
\begin{tabular}{c} \xymatrix{ \tilde{N} \ar[d]_{\eta} & (\tilde{N}^k, \tilde{N}^k \setminus \xi^{-1}(\Delta_k(N))) \ar[l] \ar[d]_\xi \\
N  & (N^k, N^k \setminus \Delta_k(N)) \ar[l] \\}
\end{tabular}
\end{table}

where $\xi= \underbrace{\eta \times \cdots \times \eta}_{k-times}$ and the horizontal maps are projections on the first coordinate. These horizontal maps are fibered pairs with fibers $(N^{k-1}, N^{k-1} \setminus \underbrace{(y_0^1, y_0^1, \ldots , y_0^1)}_{(k-1)-times})$ (lower horizontal map)  and $(\tilde{N}^{k-1}, \tilde{N}^{k-1} \setminus \underbrace{\eta^{-1}(y_0^1)\times \eta^{-1}(y_0^1) \times \cdots \times \eta^{-1}(y_0^1)}_{(k-1)-times} )$ (upper horizontal map). Then, choosing $\tilde{y}_0=(\tilde{y}_0^1, \ldots , \tilde{y}_0^k) \in \tilde{N}^k$, we have the isomorphisms
\begin{eqnarray*}
&& \pi_{n(k-1)}(\tilde{N}^{k-1}, \tilde{N}^{k-1} \setminus \eta^{-1}(y_0^1)\times \eta^{-1}(y_0^1) \times \cdots \times \eta^{-1}(y_0^1) ; (\tilde{y}_0^2, \ldots , \tilde{y}_0^k)) \\
& & \simeq  \pi_{n(k-1)}  (\tilde{N}^k, \tilde{N}^k \setminus \xi^{-1}(\Delta_k(N)); \tilde{y}_0)
\end{eqnarray*}
 and
\begin{eqnarray*}
& & \pi_{n(k-1)}(N^{k-1}, N^{k-1} \setminus (y_0^1, y_0^1, \ldots , y_0^1); (y_0^2, \ldots , y_0^k)) \\
& &\simeq  \pi_{n(k-1)} (N^k, N^k \setminus \Delta_k(N); y_0).
\end{eqnarray*}

Moreover, because $\xi: (\tilde{N}^k, \tilde{N}^k \setminus \xi^{-1}(\Delta_k(N))) \to  (N^k, N^k \setminus \Delta_k(N))$ is a covering map, we have 
\begin{eqnarray*}
 \pi_{n(k-1)} (\tilde{N}^k, \tilde{N}^k \setminus \xi^{-1}(\Delta_k(N)); \tilde{y}_0) \simeq  \pi_{n(k-1)} (N^k, N^k \setminus \Delta_k(N); y_0).
\end{eqnarray*}
Therefore 
\begin{eqnarray*}
& & \pi_{n(k-1)} (N^k, N^k \setminus \Delta_k(N); y_0) \\ & &  \simeq  \pi_{n(k-1)}(\tilde{N}^{k-1}, \tilde{N}^{k-1} \setminus \eta^{-1}(y_0^1)\times \eta^{-1}(y_0^1) \times \cdots \times \eta^{-1}(y_0^1) ; (\tilde{y}_0^2, \ldots , \tilde{y}_0^k)).
\end{eqnarray*}

Furthermore, by simple connectivity, it follows from the relative Hurewicz theorem that
\begin{eqnarray*}
& & \pi_{n(k-1)}(\tilde{N}^{k-1}, \tilde{N}^{k-1} \setminus \eta^{-1}(y_0^1)\times \eta^{-1}(y_0^1) \times \cdots \times \eta^{-1}(y_0^1) ; (\tilde{y}_0^2, \ldots , \tilde{y}_0^k))\\
& & \simeq H_{n(k-1)} (\tilde{N}^{k-1}, \tilde{N}^{k-1} \setminus \eta^{-1}(y_0^1)\times \eta^{-1}(y_0^1) \times \cdots \times \eta^{-1}(y_0^1) ).
\end{eqnarray*}

Note that $$  (\tilde{N}^{k-1}, \tilde{N}^{k-1} \setminus \eta^{-1}(y_0^1)\times \eta^{-1}(y_0^1) \times \cdots \times \eta^{-1}(y_0^1) ) = \underbrace{(\tilde{N}, \tilde{N} \setminus \eta^{-1}(y_0^1)) \times \cdots \times (\tilde{N}, \tilde{N} \setminus \eta^{-1}(y_0^1))}_{(k-1)-times}$$ and, therefore,
\begin{eqnarray*}
 && \pi_{n(k-1)} (N^k, N^k \setminus \Delta_k(N); y_0) \\ & & \simeq
\underbrace{ H_n(\tilde{N}, \tilde{N} \setminus \eta^{-1}(y_0^1) ) \otimes \cdots \otimes H_n(\tilde{N}, \tilde{N} \setminus \eta^{-1}(y_0^1) )}_{(k-1)-times}.
\end{eqnarray*}

Let $\eta^{-1}(y_0^1)=\{ \tilde{y}_i\}_i$ with the convention that $ \tilde{y}_1=\tilde{y}_0^1$. Let $V$ be an euclidean neighborhood of $y_0^1$ in $N$ and $\tilde{V}_i$ an euclidean neighborhood of $\tilde{y}_i$ such that $\eta^{-1}(V) = \sqcup_i  \tilde{V}_i$ and $\eta: \tilde{V}_i \to V$ is a homeomorphism. By excision, $$H_n(\tilde{N}, \tilde{N} \setminus \eta^{-1} (y_0^1)) \simeq \sum_i H_n (\tilde{V}_i, \tilde{V}_i \setminus \tilde{y}_i).$$

Identifying $\pi$ with the covering transformations of $\tilde{N}$, given $\sigma \in \pi$, if $\sigma \tilde{y}_0^1 = \tilde{y}_i$ then $\sigma \tilde{V}_1= \tilde{V}_i$.

Choose a local orientation of $V$ at  $y_0^1$, which determines a generator $\gamma_1\in H_n(\tilde{V}_1, \tilde{V}_1 \setminus \tilde{y}_0^1 )$. Let $\alpha \in \pi$ and set $\gamma_\alpha = \alpha \gamma_1$. Thus, $\gamma_\alpha$ generates $H_n(\tilde{V}_i, \tilde{V}_i \setminus \tilde{y}_i)$ if $\alpha \tilde{y}_0^1=\tilde{y}_i$.

Since $(\tilde{N}^{k-1}, \tilde{N}^{k-1} \setminus \eta^{-1}(y_0^1)\times \eta^{-1}(y_0^1) \times \cdots \times \eta^{-1}(y_0^1))$ is the fiber of the fiber pair map  $$\tilde{N} \leftarrow  (\tilde{N}^k, \tilde{N}^k \setminus \xi^{-1}(\Delta_k(N))) $$
over each $\tilde{y}_i$, for each  $\tilde{y}_i$ we have a fiber inclusion $$ \theta_i: (\tilde{N}^{k-1}, \tilde{N}^{k-1} \setminus \eta^{-1}(y_0^1)\times \eta^{-1}(y_0^1) \times \cdots \times \eta^{-1}(y_0^1)) \hookrightarrow (\tilde{N}^k, \tilde{N}^k \setminus \xi^{-1}(\Delta_k(N)))$$ given by $$\theta_i(u)=(\tilde{y}_i, u), \ u\in \tilde{N}^{k-1}.$$
Now, we identify $\Z [\underbrace{\pi \times \cdots \times \pi}_{(k-1)-times}]$ with the image of $$\underbrace{ H_n(\tilde{N}, \tilde{N} \setminus \eta^{-1}(y_0^1) ) \otimes \cdots \otimes H_n(\tilde{N}, \tilde{N} \setminus \eta^{-1}(y_0^1) )}_{(k-1)-times}$$ under ${\theta_1}_\ast$ via the correspondence $$ (\alpha_1, \ldots , \alpha_{k-1}) \mapsto {\theta_1}_\ast(\gamma_{\alpha_1} \otimes \cdots \otimes \gamma_{\alpha_{k-1}})= {\theta_1}_\ast (\alpha_1 \gamma_1 \otimes \cdots \otimes \alpha_{k-1} \gamma_1).$$ Observe that ${\theta_1}_\ast ( \gamma_1 \otimes \cdots \otimes  \gamma_1)$ can be represented by an $n(k-1)$-cell $\{\tilde{y}_0^1\}\times(D_1^n)^{k-1}$  in $\tilde{N}^k$ transverse to $\xi^{-1}(\Delta_k(N))$ at  $(\tilde{y}_0^1, \tilde{y}_0^1, \ldots , \tilde{y}_0^1)$. Thus, for $\sigma \in \pi$, the diagonal element $$(\sigma, \sigma, \ldots , \sigma) \in \underbrace{\pi \times \pi \times \cdots \times \pi}_{k-times}$$ sends $\{\tilde{y}_0^1\}\times(D_1^n)^{k-1}$ to an $n(k-1)$-cell $\{\tilde{y}_i\}\times(D_i^n)^{k-1}$ transverse to $\xi^{-1}(\Delta_k(N))$ at  $(\tilde{y}_i, \tilde{y}_i, \ldots , \tilde{y}_i)$ if $\sigma \tilde{y}_0^1=\tilde{y}_i $. Thus, one can see that $$ (\sigma, \sigma, \ldots , \sigma) {\theta_1}_{\ast} (\gamma_1 \otimes \gamma_1 \otimes \cdots \otimes \gamma_1)= (\text{sgn} \, \sigma )^{k-1} {\theta_1}_{\ast}  (\gamma_1 \otimes \gamma_1 \otimes \cdots \otimes \gamma_1).$$

Now, we can compute the action of $\pi^k$ on $\Z[\pi^{k-1}]$ via the above identifications:
\begin{equation}\label{local_system_action}
\begin{aligned}
(\sigma_1, \ldots , \sigma_k) \circ (\alpha_1, \ldots , \alpha_{k-1}) &\equiv (\sigma_1, \ldots , \sigma_k) {\theta_1}_\ast (\alpha_1 \gamma_1 \otimes \cdots \otimes \alpha_{k-1} \gamma_1)\\
&= (\sigma_1, \ldots , \sigma_k) (1\times \alpha_1 \times \cdots \times \alpha_{k-1}) {\theta_1}_\ast ( \gamma_1 \otimes \cdots \otimes  \gamma_1)\\
&= (\sigma_1, \sigma_2 \alpha_1, \ldots , \sigma_k \alpha_{k-1}) {\theta_1}_\ast ( \gamma_1 \otimes \cdots \otimes  \gamma_1)\\
&= (1, \sigma_2 \alpha_1 \sigma_1^{-1} , \ldots , \sigma_k \alpha_{k-1} \sigma_1^{-1}) (\sigma_1,  \ldots, \sigma_1)  {\theta_1}_\ast ( \gamma_1 \otimes \cdots \otimes  \gamma_1)\\
&= (\text{sgn} \, \sigma_1 )^{k-1} (1, \sigma_2 \alpha_1 \sigma_1^{-1} , \ldots , \sigma_k \alpha_{k-1} \sigma_1^{-1}) {\theta_1}_\ast ( \gamma_1 \otimes \cdots \otimes  \gamma_1)\\
&= (\text{sgn} \, \sigma_1 )^{k-1} {\theta_1}_\ast ( \sigma_2 \alpha_1 \sigma_1^{-1} \gamma_1 \otimes \cdots \otimes  \sigma_k \alpha_{k-1} \sigma_1^{-1} \gamma_1)\\
&\equiv (\text{sgn} \, \sigma_1 )^{k-1} ( \sigma_2 \alpha_1 \sigma_1^{-1}, \ldots , \sigma_k \alpha_{k-1} \sigma_1^{-1}).
\end{aligned}
\end{equation}

By pulling back the local system by the map $f:M\to N^k$, the action of $\pi_1(M)$ on $\mathbb Z[\pi^{k-1}]$ is given by
\begin{equation}\label{induced-local-system}
\begin{aligned}
\gamma \circ (\alpha_1, \ldots , \alpha_{k-1}) &=(\varphi_1(\gamma), ... ,\varphi_k(\gamma))\circ (\alpha_1, \ldots , \alpha_{k-1}) \\
                                          &=(\text{sgn} \, \varphi_1(\gamma) )^{k-1}(\varphi_2(\gamma) \alpha_1 \varphi_1(\gamma)^{-1},\ldots ,\varphi_{k}(\gamma) \alpha_{k-1} \varphi_1(\gamma)^{-1}).
\end{aligned}
\end{equation}

Here, $\varphi_i$ is the homomorphism induced by $f_i$.

\section{Lefschetz coincidence class as primary obstruction}

Following \cite{fadell-husseini}, the primary obstruction to deforming $f$ off a subspace $B$ was defined as a {\it universal element} in \cite{dobrenko}. In our setting, first we triangulate $N$ and hence $N^k$ so that the $(k-1)n$ skeleton $(N^k)^{((k-1)n)}\subset N^k\setminus \Delta_k(N)$ for $k\ge 3$. Then there is the (only) primary obstruction $\tilde \mu_k \in H^{(k-1)n}(N^k,N^k\setminus \Delta_k(N); \mathbb Z[\pi^{k-1}])$ to deforming the identity map $1_{N^k}$ off the subspace $\Delta_k(N)$. We call this element $\tilde \mu_k$ the {\it twisted Thom class} of the normal bundle of $\Delta_k(N)$ in $N^k$ or simply the {\it twisted Thom class} of $N^k$.

For any $i,j$, consider the map 
$$
e: (N^{i+j-1}, N^{i+j-1}\setminus \Delta_{i+j-1}(N))\to (N^{i}, N^{i}\setminus \Delta_{i}(N)) \times (N^{j}, N^{j}\setminus \Delta_{j}(N))
$$
given by $e(x_1,x_2,...,x_{i+j-1})=((x_1,x_2,...,x_i), (x_1,x_{i+1},x_{i+2},...,x_{i+j-1})$.

\begin{proposition}\label{product_Thom_class}
The map $e$ induces a homomorphism
$$H^{(i-1)n}(N^{\times_i}; \mathbb Z[\pi^{i-1}]) \otimes H^{(j-1)n}(N^{\times_j}; \mathbb Z[\pi^{j-1}]) \to H^{(i+j-2)n}(N^{\times_{i+j-1}}; \mathbb Z[\pi^{i+j-2}])
$$
such that $e^*(\tilde \mu_i \otimes \tilde \mu_j)=\tilde \mu_{i+j-1}$. Here $N^{\times_k}$ denotes the pair $(N^k, N^k\setminus \Delta_k(N))$ and $\pi=\pi_1(N)$.
\end{proposition}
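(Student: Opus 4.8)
The plan is to recognize the twisted Thom class $\tilde\mu_k$ as the primary obstruction to deforming $1_{N^k}$ off $\Delta_k(N)$, and then to use the fact that the obstruction-theoretic characterization is preserved under suitably compatible maps of pairs. First I would set up the diagram for the map $e$: composing $e$ with the identity on $N^{i+j-1}$ amounts to the map $(1_{N^{i+j-1}},\,\text{restriction to the two blocks of coordinates})$, and one checks that $e^{-1}(\Delta_i(N)\times N^{j}\setminus\Delta_j(N)\ \cup\ \text{(symmetric term)})$ is precisely $N^{i+j-1}\setminus\Delta_{i+j-1}(N)$, i.e. $e$ really is a map of pairs as written. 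The first step is therefore to verify that $e$ carries the complement into the product of complements, which is an elementary point-set computation: $(x_1,\dots,x_{i+j-1})$ lies in $\Delta_{i+j-1}(N)$ iff all coordinates agree iff both $(x_1,\dots,x_i)\in\Delta_i(N)$ and $(x_1,x_{i+1},\dots,x_{i+j-1})\in\Delta_j(N)$.

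The second step is to make sense of the homomorphism on cohomology with the twisted coefficients. The external cup product lands in $H^{(i+j-2)n}$ of the product pair with coefficients $\mathbb Z[\pi^{i-1}]\otimes\mathbb Z[\pi^{j-1}]$, and I would identify this tensor product of group rings, as a $\pi_1$-module over the product pair, with (the pullback under $e$ of) $\mathbb Z[\pi^{i+j-2}]$; the bookkeeping here is exactly the coordinate bookkeeping already displayed in \eqref{local_system_action} of Section 4, where the action on $\mathbb Z[\pi^{k-1}]$ was computed by "forgetting the first coordinate." Concretely, the $(k-1)$ tensor slots correspond to the $(k-1)$ non-first coordinates, and under $e$ the $(i+j-2)$ free coordinates of $N^{i+j-1}$ split as the $(i-1)$ tail coordinates of the first block together with the $(j-1)$ tail coordinates of the second block; matching these gives the coefficient isomorphism $e^*\big(\mathbb Z[\pi^{i-1}]\otimes\mathbb Z[\pi^{j-1}]\big)\cong\mathbb Z[\pi^{i+j-2}]$ compatibly with the $\pi_1(N^{i+j-1})$-actions. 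This is the step I expect to be the main obstacle — not because it is deep, but because getting the local-coefficient identifications to commute with $e^*$, the external product, and the orientation-sign factors $(\text{sgn}\,\sigma_1)^{k-1}$ requires care.

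The third step is the actual identification $e^*(\tilde\mu_i\otimes\tilde\mu_j)=\tilde\mu_{i+j-1}$. I would argue via naturality and uniqueness of the primary obstruction. The class $\tilde\mu_k$ is characterized (up to the identification above) as the first obstruction to lifting $1_{N^k}$ through the path fibration replacing $N^k\setminus\Delta_k(N)\hookrightarrow N^k$; equivalently, restricted to a normal fiber $(\mathbb R^{(k-1)n},\mathbb R^{(k-1)n}\setminus 0)$ of $\Delta_k(N)$ it is the generator dual to the chosen orientation (this is exactly how $\tilde\mu_k$ was pinned down in Section 5, via the cell $\{\tilde y_0^1\}\times(D_1^n)^{k-1}$ transverse to $\xi^{-1}(\Delta_k)$). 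Since $e$ restricted to a normal slice of $\Delta_{i+j-1}(N)$ is a linear isomorphism onto the product of a normal slice of $\Delta_i(N)$ with a normal slice of $\Delta_j(N)$ — here one uses that the tubular neighborhoods can be chosen so that $e$ is modeled on $(\mathbb R^{(i-1)n}\oplus\mathbb R^{(j-1)n})\xrightarrow{\cong}\mathbb R^{(i-1)n}\oplus\mathbb R^{(j-1)n}$ — the external product of the two local generators pulls back to the local generator for $\Delta_{i+j-1}(N)$, with the orientation conventions matching precisely because the sign factor is multiplicative: $(\text{sgn}\,\sigma)^{(i-1)}\cdot(\text{sgn}\,\sigma)^{(j-1)}=(\text{sgn}\,\sigma)^{(i+j-2)}$. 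Finally, since $H^{(i+j-2)n}(N^{i+j-1},N^{i+j-1}\setminus\Delta_{i+j-1}(N);\mathbb Z[\pi^{i+j-2}])$ is (by the computation of Section 4, which identifies it with $H_n(\tilde N,\tilde N\setminus\eta^{-1}(y_0^1))^{\otimes(i+j-2)}$ as a module, together with Poincaré–Lefschetz duality on $N^{i+j-1}$) detected by this local restriction, a class agreeing with $\tilde\mu_{i+j-1}$ on a normal fiber equals $\tilde\mu_{i+j-1}$. This gives the claim.

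*A remark on the sign conventions.* I would be explicit in the writeup that the identification $\mathbb Z[\pi^{i-1}]\otimes\mathbb Z[\pi^{j-1}]\cong\mathbb Z[\pi^{i+j-2}]$ used here is the one induced by the evident partition of the $(i+j-2)$ coordinate slots, and that under \eqref{induced-local-system} this is compatible with pulling everything back to $M$; this compatibility is what will be invoked in Section 5 when turning Proposition~\ref{product_Thom_class} into a product formula for the obstruction classes $o_{(k-1)n}(f_1,\dots,f_k)$ themselves.
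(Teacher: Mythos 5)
Your proposal is correct, but it takes a genuinely different route from the paper's own proof. The paper works entirely at the cochain level: it observes that the obstruction cocycle for $1_{N^q}$ has the tautological form $c_{(q-1)n}(1_{N^q})(\sigma)=[\sigma]$, and then verifies the identity $\tilde c_{i+j-1}=e^{\#}(\tilde c_i\otimes\tilde c_j)$ directly on $(i+j-1)n$-simplices of product form, using the Eilenberg--Zilber map to produce the cross product in cohomology with the diagonal $\pi^{i+j}$-action. You instead argue at the level of cohomology classes: you check that $e$ restricts to an (orientation-compatible) linear isomorphism between normal slices of $\Delta_{i+j-1}(N)$ and a product of normal slices of $\Delta_i(N)\times\Delta_j(N)$, so that $e^*(\tilde\mu_i\otimes\tilde\mu_j)$ and $\tilde\mu_{i+j-1}$ agree on a normal fiber, and then invoke injectivity of restriction-to-a-fiber to conclude they are equal. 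Both proofs have to do the same coefficient-system bookkeeping (the splitting of the $(i+j-2)$ free coordinates into an $(i-1)$- and a $(j-1)$-block, and the multiplicativity of $(\mathrm{sgn}\,\sigma)^{\bullet}$), so that part is common. The paper's argument has the advantage of being self-contained at the cochain level and not needing any detection lemma; yours is more geometric and makes visible why the sign and coefficient conventions have to be what they are.

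One small point you should tighten: the injectivity of the restriction to a normal fiber is the crux of your step three, and the justification you sketch --- citing the Section 4 identification of $\pi_{n(k-1)}(N^{\times_{i+j-1}})$ and Poincar\'e--Lefschetz duality --- conflates the coefficient module with the cohomology group being computed. The cleaner justification is excision plus the twisted Thom isomorphism for the normal bundle of $\Delta_{i+j-1}(N)$, which reduces $H^{(i+j-2)n}(N^{\times_{i+j-1}};\mathbb Z[\pi^{i+j-2}])$ to $H^0$ of the connected manifold $\Delta_{i+j-1}(N)\cong N$ with local coefficients; $H^0$ with local coefficients is the submodule of $\pi$-invariants, which injects into the stalk at the chosen basepoint, and this injection is exactly restriction to a normal fiber. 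With that substituted, your proof is sound.
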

\begin{proof}
First there is an Eilenberg-Zilber map 
$${\rm EZ}: C_*(N^{\times_i}; \mathbb Z[\pi^{i-1}]) \otimes C_*(N^{\times_j}; \mathbb Z[\pi^{j-1}]) \to C_*(N^{\times_{i+j}}; \mathbb Z[\pi^{i-1}]\otimes \mathbb Z[\pi^{j-1}])$$
where the action of $\pi^{i+j}=\pi^i\times \pi^j$ on $\mathbb Z[\pi^{i-1}]\otimes \mathbb Z[\pi^{j-1}]$ is the diagonal action. More precisely, for $(\sigma_1,...,\sigma_{i+j})\in \pi^{i+j}$, we have
\begin{equation}\label{ij-action}
\begin{aligned}
&(\sigma_1,..., \sigma_{i+j})\circ (\alpha_1,...,\alpha_{i-1},\beta_1,...,\beta_{j-1}) \\
                                         =&({\rm sgn} \sigma_1)^{i-1}(\sigma_2\alpha_1 \sigma_1^{-1},...,\sigma_i \alpha_{i-1}\sigma_1^{-1}) \otimes ({\rm sgn} \sigma_{i+1})^{j-1}(\sigma_{i+2}\beta_1 \sigma_{i+1}^{-1},...,\sigma_{i+j} \beta_{j-1}\sigma_{i+1}^{-1}).
\end{aligned}
\end{equation}
Now the map $e$ induces a local system $e^*(\mathbb Z[\pi^{i-1}]\otimes \mathbb Z[\pi^{j-1}])$ on $N^{\times_{i+j-1}}$ and this system is given by the following action:
\begin{equation*}
\begin{aligned}
&(\sigma_1,...,\sigma_{i+j-1})\ast (\alpha_1,...,\alpha_{i-1},\beta_1,...,\beta_{j-1}) \\
=&e_{\#}(\sigma_1,...,\sigma_{i+j-1})\circ (\alpha_1,...,\alpha_{i-1},\beta_1,...,\beta_{j-1}) \\
=&(\sigma_1,...,\sigma_i, \sigma_1, \sigma_{i+1},...,\sigma_{i+j-1})\circ (\alpha_1,...,\alpha_{i-1},\beta_1,...,\beta_{j-1}) \\
=&({\rm sgn} \sigma_1)^{i-1}(\sigma_2\alpha_1 \sigma_1^{-1},...,\sigma_i \alpha_{i-1}\sigma_1^{-1}) \cdot ({\rm sgn} \sigma_{1})^{j-1}(\sigma_{i+1}\beta_1 \sigma_{1}^{-1},...,\sigma_{i+j-1} \beta_{j-1}\sigma_{1}^{-1})
\end{aligned}
\end{equation*}
This action coincides with that of \eqref{local_system_action} so that $e^*(\mathbb Z[\pi^{i-1}]\otimes \mathbb Z[\pi^{j-1}])$ coincides with the local system $\mathbb Z[\pi^{i+j-2}]$ discussed above. Now the obstruction to deforming the identity map $1_{N^q}$ off the subspace $\Delta_q(N)$ has a simple cochain representation given by $c_{(q-1)n}(1_{N^q})(\sigma)=[\sigma] \in \pi_{(q-1)n}(N^{\times_q})$. Let $\sigma_1 \times \sigma_2 \times ...\times \sigma_{i+j-1}$ be an $(i+j-1)n$ simplex where each $\sigma_i$ is an $n$-simplex in $N$ and $\tilde c_p$ denote the cochain representing the twisted Thom class $\tilde \mu_p$. Then 
\begin{equation*}
\begin{aligned}
\langle \tilde c_{i+j-1}, \sigma_1 \times \sigma_2 \times ...\times \sigma_{i+j-1}\rangle &= 
[\sigma_1 \times \sigma_2 \times ... \times \sigma_i\times \sigma_{i+1} \times ... \times \sigma_{i+j-1}] \qquad \in \mathbb Z[\pi^{i+j-2}] \\
&= e^*([\sigma_1 \times \sigma_2 \times ... \times \sigma_i] \otimes [\sigma_1 \times \sigma_{i+1} \times ... \times \sigma_{i+j-1}] ) \\
&= e^*(\langle \tilde c_{i}, \sigma_1 \times \sigma_2 \times ... \times \sigma_i\rangle \otimes \langle \tilde c_j, \sigma_1 \times \sigma_{i+1} \times ... \times \sigma_{i+j-1}]\rangle) \\
&= e^*(\langle \tilde c_{i}\otimes \tilde c_{j}, (\sigma_1 \times \sigma_2 \times ... \times \sigma_i)\otimes (\sigma_1 \times \sigma_{i+1} \times ... \times \sigma_{i+j-1})\rangle) \\
&= \langle \tilde c_{i}\otimes \tilde c_{j}, e_*((\sigma_1 \times \sigma_2 \times ... \times \sigma_i)\otimes (\sigma_1 \times \sigma_{i+1} \times ... \times \sigma_{i+j-1}))\rangle.
\end{aligned}
\end{equation*}
It follows that $e^*(\tilde \mu_i \otimes \tilde \mu_j)=\tilde \mu_{i+j-1}$.
\end{proof}

As an immediate corollary, we have the following useful result.

\begin{corollary} \label{product_Thom_classes}
Let $e': N^{\times_k} \to \underbrace{N^{\times_2} \times ... \times N^{\times_2}}_{k-1}$ be defined by
$$
e'(x_1,...,x_k)=((x_1,x_2),(x_1,x_3),...,(x_1,x_k)).
$$
Suppose $n=\dim N\ge 3$. Then $e'$ induces a homomorphism $e'^*$ such that $e'^*(\underbrace{\tilde \mu_2 \otimes ... \otimes \tilde \mu_2}_{k-1})=\tilde \mu_{k}$.
\end{corollary}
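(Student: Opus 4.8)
The plan is to obtain this formally from Proposition~\ref{product_Thom_class} by induction on $k$, realizing $e'$ as an iterate of the maps $e$ appearing there.

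For $i\ge 2$ let $e_i:N^{\times_{i+1}}\to N^{\times_i}\times N^{\times_2}$ denote the $j=2$ instance of the map $e$ of Proposition~\ref{product_Thom_class}, namely $e_i(x_1,\ldots,x_{i+1})=((x_1,\ldots,x_i),(x_1,x_{i+1}))$, and write $e'_m:N^{\times_m}\to (N^{\times_2})^{m-1}$ for the map of the corollary (so $e'_k=e'$). The one computation I would record is the factorization
$$
e'_k=(e'_{k-1}\times\mathrm{id}_{N^{\times_2}})\circ e_{k-1},
$$
which is immediate on points: $(e'_{k-1}\times\mathrm{id})\circ e_{k-1}(x_1,\ldots,x_k)=((x_1,x_2),\ldots,(x_1,x_{k-1}),(x_1,x_k))=e'_k(x_1,\ldots,x_k)$. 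Note also that the hypothesis $n\ge 3$ makes every pair $N^{\times_i}$ with $2\le i\le k$ be $((i-1)n-1)$-connected with $(i-1)n\ge 3$, so each twisted Thom class $\tilde\mu_i$ is defined and Proposition~\ref{product_Thom_class} applies to each $e_i$.

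The induction then runs as follows. For $k=2$ the map $e'_2$ is the identity of $N^{\times_2}$ and there is nothing to prove. For the inductive step, assuming ${e'_{k-1}}^{*}(\tilde\mu_2\otimes\cdots\otimes\tilde\mu_2)=\tilde\mu_{k-1}$, I would dualize the factorization and use naturality of the external product, together with the inductive identification of the relevant local systems on $(N^{\times_2})^{k-1}$ and on $N^{\times_{k-1}}\times N^{\times_2}$, to get
$$
{e'_k}^{*}(\tilde\mu_2\otimes\cdots\otimes\tilde\mu_2)=e_{k-1}^{*}\big((e'_{k-1}\times\mathrm{id})^{*}(\tilde\mu_2\otimes\cdots\otimes\tilde\mu_2)\big)=e_{k-1}^{*}\big(\tilde\mu_{k-1}\otimes\tilde\mu_2\big),
$$
and then Proposition~\ref{product_Thom_class} with $i=k-1$, $j=2$ gives $e_{k-1}^{*}(\tilde\mu_{k-1}\otimes\tilde\mu_2)=\tilde\mu_k$, closing the induction.

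The only point requiring genuine care — and it is essentially already settled inside the proof of Proposition~\ref{product_Thom_class} — is the coefficient bookkeeping: one must check that each $e_i^{*}$ identifies the external system $\mathbb Z[\pi^{i-1}]\otimes\mathbb Z[\pi]$ (with the diagonal $\pi^i\times\pi^2$ action) with the system $\mathbb Z[\pi^i]$ via the action formula~\eqref{local_system_action}, and that $(e'_{k-1}\times\mathrm{id})^{*}$ is compatible with the system obtained inductively for $e'_{k-1}$. Both are routine functoriality statements once the $k=2$ case and the two-factor case of the Proposition are granted; the iterated external product is associative and the attendant sign conventions are harmless, handled exactly as in the Proposition. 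Hence I expect no real obstacle beyond keeping the local systems straight.
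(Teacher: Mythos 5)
Your proposal is correct and is precisely the inductive iteration of Proposition~\ref{product_Thom_class} that the paper invokes silently by calling the corollary ``immediate'': the factorization $e'_k=(e'_{k-1}\times\mathrm{id})\circ e_{k-1}$, naturality of the external product, and one application of the proposition with $(i,j)=(k-1,2)$ per step. The only stylistic quibble is that ``dualize'' should just read ``pass to cohomology,'' and the $n\ge3$ hypothesis is needed to make $\pi_{n-1}(F)$ abelian (hence $\tilde\mu_2$ defined) rather than for the stated connectivity, but the substance is right.
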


Using the twisted Thom class as an element in the cohomology of $N^{\times_k}$ with local coefficients $\mathbb Z[\pi^{k-1}]$, we define the {\it twisted Lefschetz coincidence class of $f_1,...,f_k$} to be the element $\mathcal L(f_1,...,f_k):=\Lambda (f_1,...,f_k; \mathbb Z[\pi^{k-1}])(\tilde \mu_k)$. Since $\tilde \mu_k$ is the obstruction to deforming the identity off the subspace $\Delta_k(N)$, it follows from \cite{dobrenko} that $\mathcal L(f_1,...,f_k)=o_{(k-1)n}(f_1,...,f_k)$ the primary obstruction to deforming $f_1,...,f_k$ to be coincidence free on the $(k-1)n$ skeleton of $X$.

In the case when $N$ is oriented, it was already shown in \cite{GJW} that the primary obstruction to deforming $f$ and $g$ is mapped to the classical Lefschetz coincidence number $L(f,g)$ under the augmentation homomorphism $\mathbb Z[\pi] \to \mathbb Z$. Next, we show an analogous result, that is, there is a natural homomorphism $\tilde \eta$ induced by the augmentation map such that $\tilde \eta (\mathcal L(f_1,...,f_k))=L_2(f_1,...,f_k)$. Here, we assume the principal ideal domain $R$ is $\mathbb Z$.

\begin{theorem}\label{augmentation}
The augmentation map $\epsilon^{k-1}: \mathbb Z[\pi^{k-1}] \to \Gamma^*_N \times ... \times \Gamma^*_N$ induces a homomorphism $\tilde \eta$ such that $\tilde \eta(\mathcal L(f_1,...,f_k))=L_2(f_1,...,f_k)$.
\end{theorem}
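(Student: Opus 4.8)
The plan is to reduce Theorem~\ref{augmentation} to the already-established identity \eqref{L2-phi}--\eqref{Lef-R_N2}, namely that $L_2(f_1,\dots,f_k)=\Lambda(f_1,\dots,f_k;R\times\Ga^*_N\times\cdots\times\Ga^*_N)(\mu_k)$, together with the cochain description of $\tilde\mu_k$ obtained in the proof of Proposition~\ref{product_Thom_class}. The key point is that the augmentation $\epsilon\colon\Z[\pi]\to\Z$ sends the generator $\gamma_1\in H_n(\tilde V_1,\tilde V_1\setminus\tilde y_0^1)$ to a local orientation class, and the twisted action on $\Z[\pi^{k-1}]$ computed in \eqref{local_system_action} becomes, after applying $\epsilon^{k-1}$, precisely the action of $\pi^k$ on $\Z\otimes\cdots\otimes\Z$ via $(\mathrm{sgn}\,\sigma_1)^{k-1}\cdot(\mathrm{sgn}\,\sigma_2)\cdots(\mathrm{sgn}\,\sigma_k)$ after the obvious bookkeeping — i.e. the orientation system $\Ga^*_N\times\cdots\times\Ga^*_N$ on $N^k$. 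So first I would verify that $\epsilon^{k-1}\colon\Z[\pi^{k-1}]\to\Ga^*_N\times\cdots\times\Ga^*_N$ is a map of local systems over $N^k$ (equivariance under $\pi^k$), which is a direct comparison of \eqref{local_system_action} with the known description of the orientation system of $N^{k-1}$ pulled back appropriately; this gives an induced coefficient homomorphism on cohomology and, pulled back by $f=(f_1,\dots,f_k)$, the homomorphism $\tilde\eta\colon H^{(k-1)n}(X;f^*\Z[\pi^{k-1}])\to H^{(k-1)n}(X;f^*(\Ga^*_N\times\cdots\times\Ga^*_N))$ of the statement.

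Next I would show that the coefficient map $\epsilon^{k-1}$ carries the twisted Thom class $\tilde\mu_k$ to the ordinary (twisted) Thom class $\mu_k$. For this I use the cochain representative from the proof of Proposition~\ref{product_Thom_class}: on an $(k-1)n$-simplex $\sigma_1\times\cdots\times\sigma_{k-1}$ transverse to the diagonal, $\langle\tilde c_k,\sigma_1\times\cdots\times\sigma_{k-1}\rangle=[\sigma_1\times\cdots\times\sigma_{k-1}]\in\Z[\pi^{k-1}]$ is the class of the identity element (with a sign), and applying $\epsilon^{k-1}$ sends it to $\pm1$, which is exactly the value of the Thom cochain $\mu_k$ on that simplex by the standard description of the Thom class of the normal bundle of $\Delta_k(N)$ as the primary obstruction to pushing $1_{N^k}$ off $\Delta_k(N)$ with orientation coefficients. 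So at the cochain level $\epsilon^{k-1}(\tilde c_k)=c_k$ where $c_k$ represents $\mu_k$, hence $\epsilon^{k-1}_*(\tilde\mu_k)=\mu_k$ in $H^{(k-1)n}(N^{\times_k};\Ga^*_N\times\cdots\times\Ga^*_N)$. (Here I am using $R=\Z$ as assumed.)

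With these two facts in hand the theorem is a naturality diagram chase. The Lefschetz coincidence homomorphism $\Lambda(f_1,\dots,f_k;-)=I^*\circ F^*$ of \eqref{Lef-R_N} is natural in the coefficient system, so the square relating $\Lambda(\cdots;\Z[\pi^{k-1}])$ and $\Lambda(\cdots;\Ga^*_N\times\cdots\times\Ga^*_N)$ via $\epsilon^{k-1}_*$ on the source and $\tilde\eta$ on the target commutes. Chasing $\tilde\mu_k$ around: down-then-right gives $\tilde\eta(\Lambda(f_1,\dots,f_k;\Z[\pi^{k-1}])(\tilde\mu_k))=\tilde\eta(\mathcal L(f_1,\dots,f_k))$, while right-then-down gives $\Lambda(f_1,\dots,f_k;\Ga^*_N\times\cdots\times\Ga^*_N)(\epsilon^{k-1}_*\tilde\mu_k)=\Lambda(f_1,\dots,f_k;\Ga^*_N\times\cdots\times\Ga^*_N)(\mu_k)=L_2(f_1,\dots,f_k)$ by \eqref{Lef-R_N2}. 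Equating the two yields the claim.

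I expect the main obstacle to be the first step: pinning down the correct orientation-twisted coefficient system $\Ga^*_N\times\cdots\times\Ga^*_N$ on $N^k$ (as opposed to $N^{k-1}$, and with the $(\mathrm{sgn}\,\sigma_1)^{k-1}$ factor coming from the first coordinate being repeated) and checking that $\epsilon^{k-1}$ is genuinely $\pi^k$-equivariant with respect to it. Concretely one must track how the identification $\pi_{n(k-1)}(N^{\times_k};y_0)\cong H_n(\tilde N,\tilde N\setminus\eta^{-1}(y_0^1))^{\otimes(k-1)}$ used in Section~4 interacts with the augmentation; the sign $(\mathrm{sgn}\,\sigma_1)^{k-1}$ in \eqref{local_system_action} must be reconciled with the $k$-fold product orientation system, and it is here — rather than in the cochain-level computation or the final diagram chase — that careful bookkeeping is required. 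Everything else is formal naturality plus the explicit cochain representatives already produced in Proposition~\ref{product_Thom_class}.
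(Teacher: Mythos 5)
Your proposal takes a genuinely different route from the paper's. The paper does not work at the cochain level with the representative $\tilde c_k$ at all; instead it sets up a commutative square via Alexander duality, transporting $\tilde\mu_k$ across the isomorphism $A_1^{-1}\colon H^{(k-1)n}(N^{\times_k};\mathbb Z[\pi^{k-1}])\to H_n(\Delta_k(N);\Gamma_{N^k}\otimes\mathbb Z[\pi^{k-1}])$ to the twisted fundamental class of $\Delta_k(N)$, observing that $1\otimes\epsilon^{k-1}$ carries this fundamental class to the fundamental class with coefficients $\Gamma_{N^k}\otimes\mathbb Z\times\Gamma^*_N\times\cdots\times\Gamma^*_N$, and then applying $A_2$ to return to $\mu_k$; the map $\eta$ is defined as $A_2\circ(1\otimes\epsilon^{k-1})\circ A_1^{-1}$ and $\tilde\eta$ is obtained by pulling back. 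What this buys the paper is precisely the point you flag as your ``main obstacle'': on $\Delta_k(N)$ the only action in play is that of the diagonally embedded $\pi_1(\Delta_k(N))\cong\pi$, where the paper checks that $(\sigma,\dots,\sigma)$ acts on $\mathbb Z\times\Gamma^*_N\times\cdots\times\Gamma^*_N$ exactly as the $\epsilon^{k-1}$-image of the action \eqref{local_system_action} --- namely by $(\mathrm{sgn}\,\sigma)^{k-1}$. No equivariance over all of $\pi^k$ is ever invoked.

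Your step one, as written, overreaches and contains an error you should fix. Applying $\epsilon^{k-1}$ to the right-hand side of \eqref{local_system_action} yields $(\mathrm{sgn}\,\sigma_1)^{k-1}$, not $(\mathrm{sgn}\,\sigma_1)^{k-1}\cdot(\mathrm{sgn}\,\sigma_2)\cdots(\mathrm{sgn}\,\sigma_k)$, and this does \emph{not} agree with the action $(\mathrm{sgn}\,\sigma_2)\cdots(\mathrm{sgn}\,\sigma_k)$ of $\pi^k$ on $\mathbb Z\times\Gamma^*_N\times\cdots\times\Gamma^*_N$ (with the first factor trivial and the remaining $k-1$ factors pulled back from coordinates $2,\dots,k$). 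So $\epsilon^{k-1}$ is not a map of $\pi^k$-modules unless the target is reinterpreted as $(k-1)$ copies of $\Gamma^*_N$ all pulled back from the \emph{first} coordinate. The two candidate local systems on $N^k$ agree when restricted to a tubular neighborhood of $\Delta_k(N)$, so the cochain identity $\epsilon^{k-1}(\tilde c_k)=c_k$ does hold in the relative group $H^{(k-1)n}(N^{\times_k};\cdot)$, but to define $\tilde\eta$ on $H^{(k-1)n}(X;\cdot)$ --- where $X$ is the full complex, not just a neighborhood of $C(f_1,\dots,f_k)$ --- one must fix the ambient local system, and the one compatible with $\epsilon^{k-1}$ is $(p_1^*\Gamma^*_N)^{\otimes(k-1)}$. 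You should make this explicit rather than leave it as a bookkeeping worry. With that correction your cochain-level argument does go through and is a perfectly valid alternative to the paper's duality-based one; it even has the merit of being more self-contained, since it re-uses the explicit representative of $\tilde\mu_k$ already produced in the proof of Proposition \ref{product_Thom_class} rather than invoking Spanier's duality theorem.
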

\begin{proof}
By the Alexander duality isomorphism (see \cite[Thm. 6.4]{Spanier}), we have the following commutative diagram.
\begin{equation*}
\begin{CD}
    H^{(k-1)n}(N^{\times_{k}};\mathbb Z[\pi^{k-1}])    @>{A_1^{-1}}>>   H_n(\Delta_k(N); \Gamma_{N^k}\otimes \mathbb Z[\pi^{k-1}]) \\
    @V{\eta}VV  @VV{1\otimes \epsilon^{k-1}}V  \\
    H^{(k-1)n}(N^{\times_{k}};\mathbb Z\times \Gamma^*_N \times ... \times \Gamma^*_N)    @<{A_2}<<     H_n(\Delta_k(N); \Gamma_{N^k}\otimes \mathbb Z\times \Gamma^*_N \times ... \times \Gamma^*_N)
    \end{CD}
\end{equation*}
Here, $A_1$ and $A_2$ are the corresponding duality isomorphisms so that $\eta=A_2 \circ (1\otimes \epsilon^{k-1}) \circ A_1^{-1}$ where $\epsilon^{k-1}$ is the augmentation map. For any $\sigma \in \pi$, the action of $(\sigma, ..., \sigma)\in \pi_1(\Delta_k(N))$ on $\mathbb Z\times \Gamma^*_N \times ... \times \Gamma^*_N$ is the same as the action on $\mathbb Z[\pi^{k-1}]$ followed by $\epsilon^{k-1}$. This implies that the twisted fundamental class of $\Delta_k(N)$ with coefficients in $\Gamma_{N^k}\otimes \mathbb Z[\pi^{k-1}]$ is mapped under $1\otimes \epsilon^{k-1}$ to the twisted fundamental class of $\Delta_k(N)$ with coefficients in $\Gamma_{N^k}\otimes \mathbb Z\times \Gamma^*_N \times ... \times \Gamma^*_N$. By duality, the twisted Thom class $\tilde \mu_k$ is mapped under $\eta$ to $\mu_k$. Define $\tilde \eta$ by $\langle \tilde \eta (m),\sigma\rangle:= \eta (\langle m, \sigma\rangle )$. Pulling back the classes $\tilde \mu_k, \mu_k$ yields the assertion.\end{proof}

\begin{theorem}\label{obstruction-product} Let $f_1,..., f_k:X \to N$ be maps from a finite complex $X$ to a closed connected manifold $N$ of dimension $n$, $n\ge 3$. Then 
$$
o_{(k-1)n}(f_1,...,f_k)=o_n(f_1,f_2) \cup o_n(f_1,f_3) \cup ... \cup o_n(f_1,f_k).
$$
\end{theorem}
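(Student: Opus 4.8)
The plan is to deduce the formula from the multiplicativity of the twisted Thom class (Corollary~\ref{product_Thom_classes}) together with the standard compatibility of the cohomology cross and cup products, with local coefficients, with pullbacks and with the diagonal map. Throughout write $C=C(f_1,\dots,f_k)$ and $C_i=C(f_1,f_i)$, so that $C=\bigcap_{i=2}^k C_i$ and hence $X\setminus C=\bigcup_{i=2}^k(X\setminus C_i)$. Recall from the discussion preceding Theorem~\ref{augmentation} that $o_{(k-1)n}(f_1,\dots,f_k)=\mathcal L(f_1,\dots,f_k)=I^*\circ F^*(\tilde\mu_k)$, where $F=(f_1,\dots,f_k)\colon(X,X\setminus C)\to N^{\times_k}$ and $I\colon X\hookrightarrow(X,X\setminus C)$; the same reasoning for $k=2$ gives $o_n(f_1,f_i)=I_i^*\circ G_i^*(\tilde\mu_2)$, where $G_i=(f_1,f_i)\colon(X,X\setminus C_i)\to N^{\times_2}$ and $I_i\colon X\hookrightarrow(X,X\setminus C_i)$.

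The first step is to factor the relevant map. Let $e'\colon N^{\times_k}\to\underbrace{N^{\times_2}\times\cdots\times N^{\times_2}}_{k-1}$ be the map of Corollary~\ref{product_Thom_classes} and let $\Delta_X\colon X\to X^{k-1}$ be the diagonal. Then $e'\circ F=(G_2\times\cdots\times G_k)\circ\Delta_X$. I would check that each $G_i$ pulls the product pair structure of $N^{\times_2}$ back to $(X,X\setminus C_i)$, and that $\Delta_X$ pulls the product pair structure of $(N^{\times_2})^{k-1}$ — whose ``deleted'' part is the union of the subsets in which at least one factor lies in $N^2\setminus\Delta(N)$ — back to the pair $(X,\bigcup_i(X\setminus C_i))=(X,X\setminus C)$. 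Thus every map in sight is a map of the appropriate pairs, and by Corollary~\ref{product_Thom_classes} we have $\tilde\mu_k=e'^*(\tilde\mu_2\otimes\cdots\otimes\tilde\mu_2)$, where $\tilde\mu_2\otimes\cdots\otimes\tilde\mu_2$ denotes the external relative cross product with coefficients $\mathbb Z[\pi]\otimes\cdots\otimes\mathbb Z[\pi]$.

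The second step is the computation. Using naturality of the twisted cross product under $G_2\times\cdots\times G_k$, and the fact that pullback along $\Delta_X$ turns a cross product into the relative cup product, one obtains
\[
F^*(\tilde\mu_k)=(e'\circ F)^*(\tilde\mu_2\otimes\cdots\otimes\tilde\mu_2)=\Delta_X^*\big(G_2^*(\tilde\mu_2)\times\cdots\times G_k^*(\tilde\mu_2)\big)=G_2^*(\tilde\mu_2)\cup\cdots\cup G_k^*(\tilde\mu_2)
\]
in $H^{(k-1)n}(X,X\setminus C;F^*\mathbb Z[\pi^{k-1}])$. Here one invokes the identification $e'^*(\mathbb Z[\pi]\otimes\cdots\otimes\mathbb Z[\pi])=\mathbb Z[\pi^{k-1}]$ of local systems — equivalently, the coefficient computation carried out in the proof of Proposition~\ref{product_Thom_class} — to see that $F^*\mathbb Z[\pi^{k-1}]$ is identified with $\Delta_X^*\big(G_2^*\mathbb Z[\pi]\otimes\cdots\otimes G_k^*\mathbb Z[\pi]\big)$, which is exactly the coefficient system carrying the target of the displayed cup product. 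Finally, applying the homomorphism induced by $I$ and using the standard compatibility $I^*(a_2\cup\cdots\cup a_k)=I_2^*(a_2)\cup\cdots\cup I_k^*(a_k)$ between the relative and absolute cup products — valid since $I$ followed by the inclusion $(X,X\setminus C_i)\hookrightarrow(X,X\setminus C)$ equals $I_i$ — gives
\[
o_{(k-1)n}(f_1,\dots,f_k)=I^*F^*(\tilde\mu_k)=I_2^*G_2^*(\tilde\mu_2)\cup\cdots\cup I_k^*G_k^*(\tilde\mu_2)=o_n(f_1,f_2)\cup\cdots\cup o_n(f_1,f_k).
\]

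The main obstacle I anticipate is the bookkeeping of relative cohomology with the twisted coefficient systems: verifying that the pairs pull back exactly as claimed, so that all the cross and cup products live on the correct relative groups, and confirming that $F^*\mathbb Z[\pi^{k-1}]$ and $\Delta_X^*\bigl(\bigotimes_i G_i^*\mathbb Z[\pi]\bigr)$ are genuinely identified — not merely abstractly isomorphic — under the maps used, so that the naturality and diagonal formulas for the twisted cross and cup products apply verbatim. Once these identifications are pinned down, the argument reduces to the formal multiplicative properties of (co)homology with local coefficients.
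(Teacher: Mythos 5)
Your proposal follows essentially the same route as the paper's proof: both deduce the theorem from Corollary~\ref{product_Thom_classes} by pulling back the tensor product $\tilde\mu_2\otimes\cdots\otimes\tilde\mu_2$ along the factorization $e'\circ F$ and reading off the cup product. The paper compresses what you call the diagonal step into the single map $(j_1,\ldots,j_{k-1})\colon X\to\prod_i(X,X\setminus C_i)$ and skips the coefficient-system discussion; your more explicit treatment of the pairs and of the identification $F^*\mathbb Z[\pi^{k-1}]=\Delta_X^*(\bigotimes_i G_i^*\mathbb Z[\pi])$ is a welcome clarification rather than a departure.
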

\begin{proof} In Corollary \ref{product_Thom_classes} we state that if $e': N^{\times_k} \to \underbrace{N^{\times_2} \times ... \times N^{\times_2}}_{k-1}$ is the map defined by $e'(x_1,...,x_k)=((x_1,x_2),(x_1,x_3),...,(x_1,x_k))$
then $e'^*(\underbrace{\tilde \mu_2 \otimes ... \otimes \tilde \mu_2}_{k-1})=\tilde \mu_{k}$.  Let us consider the inclusions $I: X \to (X, X\setminus C(f_1, \ldots, f_k) )$ and $j_i: X \to (X, X \setminus C(f_1, f_i))$, $i=2, \ldots , k$. Then, $$o_{(k-1)n}(f_1,...,f_k) = I^\ast (f_1, \ldots , f_k)^\ast (\tilde \mu_k) $$ and $$o_n(f_1,f_i)= j_i^\ast (f_1, f_i)^\ast (\tilde \mu_2), \ i=2, \ldots , k.$$ Note that $e' \circ (f_1, \ldots , f_k) \circ I = ((f_1, f_2) \times \ldots \times (f_1, f_k)) \circ (j_1, \ldots , j_{k-1})$. Therefore
\begin{eqnarray*}
o_{(k-1)n}(f_1,...,f_k) &=& I^\ast (f_1, \ldots , f_k)^\ast (\tilde \mu_k) \\
&=& I^\ast (f_1, \ldots , f_k)^\ast (e'^*(\tilde \mu_2 \otimes ... \otimes \tilde \mu_2))\\
&=& (j_1, \ldots , j_{k-1})^\ast  \circ ((f_1, f_2) \times \ldots \times (f_1, f_k))^\ast (\tilde \mu_2 \otimes ... \otimes \tilde \mu_2)\\
&=& j_1^\ast (f_1, f_2)^\ast (\tilde \mu_2) \cup j_2^\ast (f_1, f_3)^\ast (\tilde \mu_2) \cup ... \cup j_{k-1}^\ast (f_1, f_k)^\ast (\tilde \mu_2)\\
&=& o_n(f_1,f_2) \cup o_n(f_1,f_3) \cup ... \cup o_n(f_1,f_k)
\end{eqnarray*}
\end{proof}

\begin{remark} Since $o_{(k-1)n}(f_1,...,f_k)=\mathcal L(f_1,..., f_k)$ and $o_n(f_1,f_i)=\mathcal L(f_1,f_i)$, Theorem \ref{obstruction-product} states that for $n\ge 3$, we have
\begin{equation}\label{twisted_L_product}
\mathcal L(f_1,...,f_k)=\mathcal L(f_1,f_2) \cup \mathcal L(f_1,f_3) \cup ... \cup \mathcal L(f_1,f_k).
\end{equation}

Combining with the homomorphism $\tilde \eta$ as in Theorem \ref{augmentation}, it is easy to establish the following product
$$
L_2(f_1,...,f_k)=L_2(f_1,f_2) \cup L_2(f_1,f_3) \cup ... \cup L_2(f_1,f_k)
$$
which is obtained in \cite[Thm. 4.2]{MS}.
\end{remark}

\section{Converse of Lefschetz Coincidence Theorem}
The converse of the Lefschetz coincidence theorem in our setting amounts to studying the problem when the maps $f_1,...,f_k$ can be deformed to be coincidence free. For instance in the classical case, it has been shown \cite{fadell} that when $N$ is simply-connected then the vanishing of the Lefschetz coincidence number provides a converse to the Lefschetz coincidence theorem. To study this problem for multiple maps, we use the obstruction theory developed in the previous sections. 

Next, we need to calculate the obstruction cocycle, following \cite[pp.20-21]{dobrenko}.
Suppose $M$ is a closed $(k-1)n$-dimensional manifold. By general position, we may assume that the map $f=(f_1,...,f_k):M\to N^k$ is transverse to the submanifold $\Delta_k(N)$ so that $C=f^{-1}(\Delta_k(N))=C(f_1,...,f_k)$ is a finite set of coincidence points of $f_1,\ldots,f_k$. Let $C=\{s_1,...,s_r\}$. For each isolated coincidence point $s_i$, we may assume that there is a maximal $(k-1)n$-simplex $\sigma_i$ containing $s_i$ such that $C\cap \overline \sigma_i=\{s_i\}$. Following \cite{fadell-husseini,dobrenko}, the primary obstruction cocycle $c_{(k-1)n}(f)$ to deforming $f$ into the subspace $N^k\setminus \Delta_k(N)$ is given by
\begin{equation}\label{local-cocycle}
c_{(k-1)n}(f)(\sigma)=\left\{ 
\aligned
& 0 \qquad & \text{if $\sigma\cap C=\emptyset$} \\
& \lambda (f,s_i)[f|_{\sigma}] \qquad & \text{if $\sigma=\sigma_i$.}
\endaligned
\right.
\end{equation}
     
Here, $f|_{\sigma_i}$ represents the restriction map from $(\sigma_i,\partial \sigma_i)$ to $(V_i\times ... \times V_i, V_i\times ... \times V_i \setminus \{(y_i,..., y_i)\})$ where $V_i$ is an open neighborhood of $y_i=f_1(s_i)= ... =f_k(s_i)$ in $N$. Thus, $[f|_{\sigma_i}]\in \pi_{(k-1)n}(N^k,N^k\setminus \Delta_k(N))\cong \pi_{(k-1)n-1}(F)$. Furthermore the coefficient $\lambda (f,s_i)=f|_{\sigma_i}^*(\tilde \mu_k)$ is the {\it local coincidence index} defined by the restriction $f|_{\sigma_i}$ using the homomorphism 
$$
f|_{\sigma_i}^*: H^{(k-1)n}(N^k,N^k\setminus \Delta_k(N)) \to H^{(k-1)n}(\sigma_i,\sigma_i\setminus \{s_i\}).
$$
Since $(M,M-C)=(M,M-\{s_1,...,s_r\})$, it follows from excision and additivity that 
$$
I^*\circ f^* (\tilde \mu_k)=\sum_{i=1}^r \lambda (f,s_i)
$$
where $I:M\hookrightarrow (M,M-C)$.
It follows that 
\begin{equation}\label{cocycle}
c_{(k-1)n}(f)=\sum_i^r \lambda(f,s_i) [f|_{\sigma_i}] 
\end{equation}
and
\begin{equation}\label{Lef-sum}
\mathcal L(f_1,...,f_k)=\sum_{i=1}^r \lambda (f,s_i).
\end{equation}

\begin{theorem}\label{converse1}
Let $f_1,\ldots,f_k:M\to N$ from a closed connected $(k-1)n$-manifold $M$ to a closed connected $n$-manifold $N$. Then $f_1,\ldots,f_k$ are deformable to be coincidence free if, and only if, $o_{(k-1)n}(f_1,...,f_k)=0$ where the obstruction cocycle is given by \eqref{cocycle}.
\end{theorem}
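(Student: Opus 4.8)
The plan is to treat Theorem~\ref{converse1} as a pure application of the obstruction theory set up in Section~3, now that the obstruction cocycle has been identified explicitly. Recall that $f_1,\ldots,f_k$ are deformable to be coincidence free precisely when $f=(f_1,\ldots,f_k):M\to N^k$ is deformable into $N^k\setminus\Delta_k(N)$, and this in turn holds precisely when the pullback fibration $q:f^*(p)\to M$ of $p:E\to N^k$ admits a global section. Since $(N^k,N^k\setminus\Delta_k(N))$ is $((k-1)n-1)$-connected, the fiber $F$ is $((k-1)n-2)$-connected, so the obstructions to building a section of $q$ live in $H^m(M;\pi^*_{m-1}(F))$ for $m\ge(k-1)n$, with local coefficients the system $\pi^*_{(k-1)n-1}(F)$ computed in Section~4.

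First I would dispose of the ``only if'' direction, which is immediate: if $f_1,\ldots,f_k$ are deformable to be coincidence free, then $q:f^*(p)\to M$ has a global section, so every obstruction vanishes, and in particular $o_{(k-1)n}(f_1,\ldots,f_k)=0$. (Equivalently, $o_{(k-1)n}$ is a homotopy invariant of the tuple and vanishes on any coincidence-free representative.)

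For the ``if'' direction, the key observation is that $(k-1)n=\dim M$, so $H^m(M;-)=0$ for $m>(k-1)n$ and $o_{(k-1)n}(f_1,\ldots,f_k)$ is the \emph{only} obstruction. I would triangulate $M$, put $f$ in general position with respect to $\Delta_k(N)$, and arrange the triangulation so that each coincidence point $s_i$ is interior to a top simplex $\sigma_i$ with $C\cap\overline{\sigma_i}=\{s_i\}$, exactly as in the discussion preceding \eqref{local-cocycle}. Next, produce a section $s$ of $q$ over the $((k-1)n-1)$-skeleton $M^{((k-1)n-1)}$: this is possible because the obstruction to extending a partial section over a $j$-cell lies in $\pi_{j-1}(F)$, which vanishes for $j\le(k-1)n-1$. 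The obstruction cocycle to extending $s$ over $M=M^{((k-1)n)}$ then represents $o_{(k-1)n}(f_1,\ldots,f_k)$ and equals the cocycle $\sum_i\lambda(f,s_i)[f|_{\sigma_i}]$ of \eqref{cocycle}. By hypothesis this cocycle is a coboundary $\delta b$ for some $b\in C^{(k-1)n-1}(M;\pi^*_{(k-1)n-1}(F))$; re-routing $s$ over the $((k-1)n-1)$-cells by $b$ — legitimate because, rel boundary, the homotopy classes of extensions over such a cell form a torsor over $\pi_{(k-1)n-1}(F)$ — yields a new partial section $s'$ whose obstruction cocycle is $\sum_i\lambda(f,s_i)[f|_{\sigma_i}]-\delta b=0$. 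Hence $s'$ extends over all of $M$, so $q$ has a global section and $f_1,\ldots,f_k$ are deformable to be coincidence free.

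The substantive input, rather than a genuine obstacle, is the classical obstruction-theoretic package with local coefficients (cf.~\cite{whitehead,fadell-husseini,dobrenko}): that the cohomology class $[c_{(k-1)n}(f)]$ is independent of the choice of partial section over the lower skeleta — so that ``$o_{(k-1)n}(f_1,\ldots,f_k)=0$'' is unambiguous — and that any prescribed coboundary can be realized by re-routing that partial section. With the explicit cocycle formula \eqref{cocycle} already in hand, the remaining work (general position for $f$, the choice of triangulation, and the bookkeeping with the twisted coefficient system $\pi^*_{(k-1)n-1}(F)$) is routine, so the only care needed is to invoke these standard facts correctly in the local-coefficient setting.
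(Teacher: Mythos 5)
Your proof is correct and matches the paper's intended approach. The paper states Theorem~\ref{converse1} with no separate proof, treating it as an immediate consequence of the obstruction theory of Section~3 together with the cocycle computation culminating in~\eqref{cocycle}; you have filled in exactly that standard argument, with the key observation being that $\dim M = (k-1)n$ forces the primary obstruction to be the only obstruction, so its vanishing is both necessary and sufficient for a global section of $q\colon f^*(p)\to M$ and hence for deforming $f=(f_1,\ldots,f_k)$ into $N^k\setminus\Delta_k(N)$.
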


In the case where $N$ is simply-connected, $o_{(k-1)n}(f_1,...,f_k)=\mathcal L(f_1,...,f_k)=L_2(f_1,...,f_k)=L_1(f_1,...,f_k)$ since $N$ is orientable and the coefficients are simple. In particular, when $N=S^n$ is the $n$-sphere, the formula of \cite[Prop. 3.2]{BLM} becomes
$$
o_{(k-1)n}(f_1,...,f_n)=\sum_{i=0}^{k-1} (-1)^{(i-1)n}\widehat{f_{k-i}}^*(e)
$$
where $e=\underbrace{ e_n \times ... \times e_n}_{(k-1)-times}$, $e_n$ the cohomology fundamental class of $S^n$, $\widehat{f_{j}}=(f_1,..., f_{j-1},f_{j+1},..., f_k)$.
This formula generalizes a similar formula in \cite[Theorem 3.4]{GJW}.
Furthermore, the equality $o_{(k-1)n}(f_1,...,f_k)=L_2(f_1,\ldots,f_k)$ generalizes \cite[Theorem 1.1]{fadell}) and we have the following converse theorem of the Lefschetz coincidence theorem when $N$ is simply connected.

\begin{theorem}\label{converse2}
Let $f_1,\ldots,f_k:M\to N$ from a closed connected $(k-1)n$-manifold $M$ to a closed connected and simply-connected $n$-manifold $N$. Then $f_1,\ldots,f_k$ are deformable to be coincidence free if, and only if, $L_2(f_1,...,f_k)=0$.
\end{theorem}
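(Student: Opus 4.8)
The plan is to deduce Theorem \ref{converse2} from Theorem \ref{converse1} by showing that, when $N$ is simply connected, the primary obstruction $o_{(k-1)n}(f_1,\ldots,f_k)$ is literally equal to the Lefschetz type class $L_2(f_1,\ldots,f_k)$, so that the equivalence ``deformable to be coincidence free $\iff o_{(k-1)n}(f_1,\ldots,f_k)=0$'' of Theorem \ref{converse1} becomes the desired equivalence with $L_2$.

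First I would record the effect of the hypothesis on the coefficient systems. If $N$ is simply connected then $\pi=\pi_1(N)$ is trivial, so the group ring $\mathbb Z[\pi^{k-1}]$ collapses to $\mathbb Z$; moreover $N$ is then orientable, so the orientation system $\Gamma_N$ and its dual $\Gamma^*_N$ are the constant system $\mathbb Z$. Hence every local system appearing in Sections 3--5 --- in particular $\pi^*_{(k-1)n-1}(F)$, the coefficients of the twisted Thom class $\tilde\mu_k$, and those of $\mu_k$ --- is the constant system $\mathbb Z$, and $\tilde\mu_k=\mu_k$ under this identification. Consequently the augmentation map $\epsilon^{k-1}\colon\mathbb Z[\pi^{k-1}]\to\Gamma^*_N\times\cdots\times\Gamma^*_N$ of Theorem \ref{augmentation} is the identity $\mathbb Z\to\mathbb Z$, and so is the induced homomorphism $\tilde\eta$.

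Next I would chain together the identifications already in hand. By the universal-element description of Section 5 we have $o_{(k-1)n}(f_1,\ldots,f_k)=\mathcal L(f_1,\ldots,f_k)=\Lambda(f_1,\ldots,f_k;\mathbb Z[\pi^{k-1}])(\tilde\mu_k)$, and by Theorem \ref{augmentation} the homomorphism $\tilde\eta$ sends $\mathcal L(f_1,\ldots,f_k)$ to $L_2(f_1,\ldots,f_k)$. Since $\tilde\eta$ is the identity here, this gives $o_{(k-1)n}(f_1,\ldots,f_k)=\mathcal L(f_1,\ldots,f_k)=L_2(f_1,\ldots,f_k)$. Feeding this into Theorem \ref{converse1} yields that $f_1,\ldots,f_k$ are deformable to be coincidence free if and only if $o_{(k-1)n}(f_1,\ldots,f_k)=0$, i.e.\ if and only if $L_2(f_1,\ldots,f_k)=0$.

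The theorem is thus essentially a corollary of Theorem \ref{converse1} together with Theorem \ref{augmentation}; no genuinely new argument is required. The only points needing care are the bookkeeping of coefficient systems --- verifying that under simple connectivity $\tilde\mu_k$ is identified with $\mu_k$ and that $\tilde\eta$ becomes the identity rather than merely an isomorphism --- and recalling why Theorem \ref{converse1} holds at all, namely that since $\dim M=(k-1)n$ the obstruction $o_{(k-1)n}$ lives in top degree and there are no higher obstruction groups, so its vanishing is both necessary (it vanishes on any lift) and sufficient (a partial lift can be modified on lower skeleta without obstruction and then extended over all of $M=M^{(k-1)n}$).
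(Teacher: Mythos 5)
Your proposal is correct and follows the paper's own (implicit) argument: the paper derives Theorem~\ref{converse2} from Theorem~\ref{converse1} by noting that when $N$ is simply connected it is orientable and all local coefficients become simple, so $o_{(k-1)n}(f_1,\ldots,f_k)=\mathcal L(f_1,\ldots,f_k)=L_2(f_1,\ldots,f_k)$. Your write-up just spells out the coefficient-system bookkeeping (via Theorem~\ref{augmentation} and $\tilde\eta$ collapsing to the identity) that the paper leaves as a one-line remark.
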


To obtain the converse Lefschetz theorem, Nielsen coincidence theory is often employed. In \cite{St}, P. Staecker independently investigated the coincidence problem for multiple maps from the Nielsen coincidence theory point of view. More precisely, Staecker considered the following two maps $F=(f_1,...,f_1), G=(f_2,...,f_k):M\to N^{k-1}$ and employed the usual Nielsen coincidence theory by defining the Lefschetz and Nielsen coincidence numbers $L(F,G)$ and $N(F,G)$. When $N$ is non-orientable, an appropriate (semi)index was used to define essentiality of the coincidence classes. We should point out that when $N$ is orientable, $|L_2(f_1,...,f_k)|=|L(F,G)|$ and $o_{(k-1)n}(f_1,...,f_k)=0 \Leftrightarrow N(F,G)=0$. In this case, since $C=C(f_1,...,f_k)=\{s_1,...,s_r\}$ is a finite set, $C$ is a disjoint union of $N(F,G)$ coincidence classes. Thus, the coincidence classes will have index of the same sign, if $N$ is a Jiang space; a nilmanifold \cite{daci-peter3}, an orientable coset space $G/K$ of a compact connected Lie group $G$ by a closed subgroup $K$ \cite{VW}; or a $\mathcal C$-nilpotent space whose fundamental group has a finite index center \cite{daci-peter4} where $\mathcal C$ denotes the class of finite groups. For these spaces, $L(F,G)=0 \Rightarrow N(F,G)=0$. Thus, we have the following converse theorem.

\begin{theorem}\label{converse3}
Let $f_1,\ldots,f_k:M\to N$ from a closed connected $(k-1)n$-manifold $M$ to a closed connected orientable $n$-manifold $N$. Suppose $N$ is a Jiang space; a nilmanifold, an orientable coset space $G/K$ of a compact connected Lie group $G$ by a closed subgroup $K$; or a $\mathcal C$-nilpotent space whose fundamental group has a finite index center where $\mathcal C$ is the class of finite groups. Then $f_1,\ldots,f_k$ are deformable to be coincidence free if, and only if, $L_2(f_1,...,f_k)=0$.
\end{theorem}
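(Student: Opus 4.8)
The plan is to deduce the statement from Theorem~\ref{converse1} together with the two comparison facts recorded above: writing $F=(f_1,\dots,f_1)$ and $G=(f_2,\dots,f_k)$ as maps $M\to N^{k-1}$, one has $|L_2(f_1,\dots,f_k)|=|L(F,G)|$ and $o_{(k-1)n}(f_1,\dots,f_k)=0$ if and only if $N(F,G)=0$.

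For the ``only if'' direction I would argue directly, using no hypothesis on $N$ beyond orientability. The class $L_2(f_1,\dots,f_k)=[i\circ(f_1,\dots,f_k)]^*(\mu_k)$ depends only on the homotopy classes of the $f_i$. If $f_i\simeq g_i$ with $C(g_1,\dots,g_k)=\emptyset$, then $(g_1,\dots,g_k)$ factors through $N^k\setminus\Delta_k(N)$, so $i\circ(g_1,\dots,g_k)$ factors as $M\to N^k\setminus\Delta_k(N)\hookrightarrow N^k\hookrightarrow(N^k,N^k\setminus\Delta_k(N))$; since the composite of the last two maps is zero on relative cohomology (exactness of the long exact sequence of the pair), $L_2(g_1,\dots,g_k)=0$, and hence $L_2(f_1,\dots,f_k)=0$ by homotopy invariance. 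Equivalently, this follows from \eqref{Lef-sum} applied to an empty coincidence set together with Theorem~\ref{augmentation}.

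For the ``if'' direction, assume $L_2(f_1,\dots,f_k)=0$. Then $|L(F,G)|=|L_2(f_1,\dots,f_k)|=0$, so the classical Lefschetz coincidence number $L(F,G)$ vanishes. Now I would invoke the Jiang-type rigidity: if $N$ is a Jiang space, a nilmanifold, an orientable coset space of a compact connected Lie group, or a $\mathcal C$-nilpotent space whose fundamental group has a center of finite index, then so is $N^{k-1}$ (each of these properties passes to finite products), and for such a target the coincidence classes of any pair of maps carry indices of one and the same sign --- the classical Jiang result in the Jiang case, and \cite{daci-peter3}, \cite{VW}, \cite{daci-peter4} in the remaining cases. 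Since $L(F,G)$ is the sum of these class indices and is zero, every class index must vanish, so $N(F,G)=0$. Because $N$ is orientable, the equivalence $o_{(k-1)n}(f_1,\dots,f_k)=0\Leftrightarrow N(F,G)=0$ gives $o_{(k-1)n}(f_1,\dots,f_k)=0$, and Theorem~\ref{converse1} then shows that $f_1,\dots,f_k$ are deformable to be coincidence free.

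The step requiring the most care is the equivalence $o_{(k-1)n}(f_1,\dots,f_k)=0\Leftrightarrow N(F,G)=0$, which bridges the obstruction theory of Sections~3--5 and Nielsen coincidence theory. The substance is to check that the local coincidence indices $\lambda(f,s_i)$ in the obstruction cocycle \eqref{cocycle} are organized by Nielsen class, that contributions of distinct Nielsen classes lie in independent summands of $H^{(k-1)n}(M;\mathbb Z[\pi^{k-1}])$ --- so that in the sum \eqref{Lef-sum} there is cancellation only within a class, never between classes --- and that for orientable $N$ the total index of each class coincides with its Nielsen index; the vanishing of $o_{(k-1)n}(f_1,\dots,f_k)$ is then equivalent to the vanishing of every class index, i.e.\ to $N(F,G)=0$. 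Granting this translation and the cited rigidity results, the theorem follows by the assembly above.
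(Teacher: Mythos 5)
Your argument follows the paper's route: reduce to the pair $(F,G)=((f_1,\dots,f_1),(f_2,\dots,f_k)):M\to N^{k-1}$, use the stated identities $|L_2(f_1,\dots,f_k)|=|L(F,G)|$ and $o_{(k-1)n}(f_1,\dots,f_k)=0\Leftrightarrow N(F,G)=0$, invoke the Jiang-type uniformity of class indices to get $L(F,G)=0\Rightarrow N(F,G)=0$, and close with Theorem~\ref{converse1}; the ``only if'' direction by homotopy invariance of $L_2$ matches too. You do spell out two things the paper leaves implicit and that genuinely need to be said: that each of the listed Jiang-type hypotheses on $N$ is inherited by the finite product $N^{k-1}$ (the actual target of $F$ and $G$), and a sketch of why $o_{(k-1)n}(f_1,\dots,f_k)=0$ is equivalent to $N(F,G)=0$ via organizing the obstruction cocycle \eqref{cocycle} by Reidemeister class; the paper asserts both without comment, so your version is the same proof made a bit more careful.
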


\section{Examples}

When $n=2$ and $k\ge 3$, $(k-1)n\ge 3$ so that $\pi_{(k-1)n-1}(F)\cong \pi_{(k-1)n}(N^k, N^k\setminus \Delta_k(N))$ is an abelian group so it is a local coefficient system on $N^k$ even when $N$ is a surface.  Thus, Theorem \ref{converse2} and Theorem \ref{converse3} are valid when $n=2$ provided $k\ge 3$. Next we illustrate by an example in which $f_1,f_2,f_3: X \to S_g$ are maps from a closed 4-manifold to a surface $S_g$ such that $f_1,f_i$ cannot be deformed to be coincidence free for $i=2,3$. However, $f_1,f_2,f_3$ are deformable to be coincidence free.

\begin{example}\label{2-sphere}
Let $X=S^1\times S^3$, $f_1:X\to S^2$ be given by  $f_1(x_1,x_2)=h(x_2)$ where $x_2\in S^3$ and $h:S^3\to S^2$ is the Hopf map. Let $f_2=\overline c$ be the constant map at some $c\in S^2$. Note that $f_1:X\to S^2$ is a fibration and $f_1^*([\mu])=0$ where $[\mu]$ is the cohomology fundamental group of the base space $S^2$. This follows from the fact that $H^2(X)=0$. Thus, we conclude that $L_2(f_1,f_2)=0$. Note that $f_1$ and $f_2$ cannot be deformed to be coincidence free.  
Now let $f_3:X\to S^2$ be {\it any} map. Since $L_2(f_1,f_2)=0$, by \cite[Thm. 4.2]{MS}, we have $L_2(f_1,f_2,f_3)=0$. The 2-sphere $S^2$ is simply connected so by Theorem \ref{converse2}, $f_1,f_2,f_3$ are deformable to be coincidence free. In particular, if we choose $f_3=f_2$ then $f_1,f_i$ are not deformable to be coincidence free for $i=2,3$. In fact, any maps $f_1,f_2,f_3: X\to S^2$ are deformable to be coincidence free. Note that $X$ is not symplectic so by the classical condition of Thurston \cite{T}, the fiber $S^1\times S^1$ is null homologous in $X$. As it turns out, Thurston's condition is only necessary for $X$ to be symplectic as we use that to contruct the next example.
\end{example}

\begin{example}\label{torus}
According to R. Geiges \cite{G}, there exist symplectic $4$-manifolds $X$ that is a principal $T^2$-bundle over $T^2$ but the fibration is not symplectic so that $i_*([F])=0$ in $H_2(X)$ where $[F]$ is the fundamental class of the fiber $F=T^2$. According to a result of Gottlieb (see its generalization in \cite{daci-peter}), if $p:X\to T^2$ is such a bundle then $p^*([T^2])=0$. It follows that $L_2(p,\overline c)=0$ for any $c\in T^2$ (base). On the other hand, $T^2$ is aspherical, so it follows from the main result of \cite{daci-peter5} that the (abelianized) obstruction $\overline o_2(p,\overline c)\ne 0$. Again, for {\it any} $f:X\to T^2, L_2(p,\overline c,f)=0$. Since $T^2$ is a Jiang space, it follows from Theorem \ref{converse3} that $p,\overline c, f$ are deformable to be coincidence free.
\end{example} 

We can now generalize the examples above as follows.

\begin{example}\label{general_example}
Let $M=M^{(k-1)n}$ be an even dimensional closed connected oriented manifold and $p:M \to N$ is a fibration over a closed connected oriented $n$-manifold $N$ with a typical fiber $F$, a closed connected oriented manifold. Asssume that $M$ is not symplectic, $N$ is aspherical, and is a Jiang-type space as in Theorem \ref{converse3}. For any $c\in N$ and any maps $f_3,..., f_k:M\to N$, $L_2(p,\overline c, f_3,...,f_k)=0$ so that $p,\overline c, f_3,..., f_k$ are deformable to be coincidence free but $p, \overline c$ cannot be homotopic to coincidence free maps.
\end{example}

\begin{remark}
Following an observation of F.B. Fuller, Brooks \cite{B1} showed that if $C(f',g')$ denotes the coincidence set for $f'\sim f, g'\sim g$ then there exists $g''\sim g$ such that $C(f',g')=C(f,g'')$. In other words, deforming both maps can be achieved by deforming only one of the two maps. In this multiple map setting, we can ask the same question:
Suppose $f_1\sim f_1',..., f_k\sim f_k'$ with $C(f_1',..., f_k')$. Can we obtain the same coincidence set by fixing more than one of the $k$-maps?
Using the last example with $k=3$, we take the three maps to be either (1) $p,p,c$ or (2) $c,c, p$ where $c$ denotes the constant map. If we fix the first two maps in each case, deforming the third map will {\it always} yield coincidences. This means that one can fix at most {\it one} map without changing the coincidence set so that the main result of \cite{B1} cannot be improved. Furthermore, since the approach of Staecker \cite{St} is to consider the (codimension zero) coincidence problem for the maps $F=(f_1,...,f_1)$ and $G=(f_2,...,f_k)$, our discussion above shows that if $f_1,...,f_k$ are deformable to be coincidence free then $G$ cannot be kept fixed, in other words, one {\it must} deform $G$ to some $G'$ with $C(F,G')=\emptyset$.
\end{remark}

\end{document}